\newtheorem{thm}{Theorem}[section]
\newtheorem{ques}[thm]{Question}
\newtheorem{prop}[thm]{Proposition}
\theoremstyle{definition}
\newtheorem{defn}{Definition}[section]
\theoremstyle{remark}
\newtheorem{rem}{Remark}[section]
\newcommand{\eval}[2][\right]{\relax
  \ifx#1\right\relax \left.\fi#2#1\rvert}
\begin{document}
\title{\bf Topological algebras of bounded operators with
locally solid Riesz spaces} 
\maketitle

\author{\centering ABDULLAH AYDIN \\ \bigskip  \small  
	Department of Mathematics, Mu\c{s} Alparslan University, Mu\c{s}, Turkey. \\}

\bigskip

\abstract{Let $X$ be a vector lattice and $(E,\tau)$ be a locally solid vector lattice. An operator $T:X\to E$ is said to be $ob$-bounded if, for each order bounded set $B$ in $X$, $T(B)$ is topologically bounded in $E$. In this paper, we study on algebraic properties of $ob$-bounded operators with respect to the topology of uniform convergence and equicontinuous convergence.

\bigskip
\let\thefootnote\relax\footnotetext
{Keywords: $ob$-bounded operator, order bounded operator, vector lattice,  locally solid Riesz space
	
\text{2010 AMS Mathematics Subject Classification:} 46A40, 47B65, 46H35

e-mail: a.aydin@alparslan.edu.tr}

\section{Introduction and preliminaries}\label{A1}
Bounded operators play an important role in the functional analysis. Our aim in this paper is to introduce and study $ob$-bounded operators from vector lattice to locally solid vector lattices or between locally solid vector lattices, which attracted the attention of several authors in series of recent papers; for example \cite{AA,AEEM2,Tr,Z}. By an {\em operator}, we always mean a linear operator between vector spaces.

Let us recall some notation and terminology used in this paper. In every topological vector space (over $\mathbb{R}$ or $\mathbb{C}$), there exists a base $N_0$ of zero neighborhoods with; every $V\in N_0$, $\lambda V\subseteq V$ whenever $\lvert \lambda\rvert\leq 1$; for every $V_1,V_2\in N_0$ there exists $V\in N_0$ such that $V\subseteq V_1\cap V_2$; for every $V\in N_0$ there exists $U\in N_0$ such that $U+U\subseteq V$; for every $V\in N_0$ and every scalar $\lambda$ the set $\lambda V$ is in $N_0$. Whenever we mention a neighborhood of zero, we always assume that it belongs to
a base which satisfies these properties.  A subset $A$ of a vector lattice $E$ is said to be {\em solid} if, for $y\in A$ and $x\in E$ with $\lvert x\rvert\leq\lvert y\rvert$, we have $x\in A$. Let $E$ be a vector lattice and $\tau$ be a linear topology on $E$ that has a base at zero consisting of solid sets. Then the pair $(E,\tau)$ is said a {\em locally solid vector lattice} (or, {\em locally solid Riesz space}) that means a topological vector lattice with a locally solid topology, for more details on these notions, see \cite{AB,ABPO}. 

A vector lattice $E$ is {\em order complete} if every subset of $E$ which is bounded above has a supremum. For $a\in E_+$, the {\em order interval} consists of all $x\in E$ such that $-a\leq x\leq a$. In a vector lattice $E$, a subset $B\subseteq E$ is called {\em order bounded} if it is contained in an order interval. Similarly, in a topological vector lattice $(E,\tau)$, a subset $B\subseteq E$ is called {\em bounded} (or, {\em topological bounded}) if, for each zero neighborhood $U\in E$, there exists a positive scalar $\lambda$ with $B\subseteq \lambda U$. We refer the reader for more information about some relation and result for mentioned types of bounded sets to \cite{AB,ABPO,L,Tr}. 

An {\em order bounded operator} between vector lattices sends order bounded subsets to order bounded subsets. For given a vector lattices $E$, $B_b(E)$ is the space of all order bounded operators on $E$. Since we have just order structure on $B_b(E)$, there is not exist a topological structure on it. But, for given a Banach lattice $E$, $B_b(E)$ forms a Banach lattice. Through this paper, we consider a subspace of the space of all linear operators between vector lattices. All vector spaces in this paper are assumed to be real, and all topological vector lattices are considered to be locally solid.

Now, let us give some known types of bounded operators. Let $E$ and $F$ be topological vector spaces. An operator $T:E\to F$ is said to be {\em $bb$-bounded} if it maps every bounded set into a bounded set, and said to be {\em $nb$-bounded} if $T$ maps some neighborhood of zero into a bounded set; see \cite{Tr}. When $E$ and $F$ are locally solid vector lattices, $T$ is said to be {\em $bo$-bounded} if, for every bounded set $B\subset E$, $T(B)$ is order bounded in $F$, and said to be {\em $no$-bounded} if there exists some zero neighborhood $U\subset E$ such that $T(U)$ is order bounded in $F$; see \cite{Z}. Motivated by these definitions, we give the following notion.

\begin{defn}
Let $X$ be a vector lattice and $(E,\tau)$ be a locally solid vector lattice. Then an operator $T:X\to E$ is said to be {\em $ob$-bounded} if, for every order bounded set $B\subseteq X$, $T(B)$ is $\tau$-bounded in $E$.
\end{defn}

Every order bounded subset is topologically bounded in locally solid vector lattices; see \cite[Thm.2.19(i)]{AB}, and so we have the following result.
\begin{rem}\
\begin{enumerate}
\item[(i)] Every $bb$-bounded operator between locally solid vector lattices is $ob$-bounded.
\item[(ii)] Every $bo$-bounded operator is $ob$-bounded.
\item[(iii)] Every order bounded operator from a vector lattice to a locally solid vector lattice is $ob$-bounded.
\end{enumerate}
\end{rem} 

It is natural to ask whether an $ob$-bounded operator is order bounded or $bo$-bounded. By considering the Example 2.4 of \cite{L}, we have the negative answer for order boundedness since bounded set may be not order bounded in locally solid vector lattice. Also, by considering the Example 2 of \cite{Z} and the Theorem \cite[Thm.2.19(i)]{AB}, the identity operator $I$ in the example is $ob$-bounded but it fails to be $bo$-bounded. But, on the other hand, we have a partial positive answer. Every bounded set is order bounded in locally solid vector lattice when space contains an order bounded zero neighborhood; see \cite[Thm.2.2]{L}. So, every $ob$-bounded operator from a vector lattice to a locally solid vector lattice with an order bounded zero neighborhood is order bounded, and also every $ob$-bounded operator between locally solid vector lattices with order bounded zero neighborhoods is $bo$-bounded, and lastly every $ob$-bounded operator from a locally solid vector lattice with an order bounded zero neighborhood to a locally solid vector lattice is $bb$-bounded. 

\begin{rem}\label{some example of bounded operators}
The sum of two $ob$-bounded operators is $ob$-bounded since the sum of two bounded sets in a topological vector space is bounded. The product of two $ob$-bounded operators may not be $ob$-bounded. However, the product of an $ob$-bounded operator $T$ with a $bo$-bounded operator $S$ from the left hand (i.e $S\circ T$) is order bounded and so $ob$-bounded.
\end{rem}

The class of all $ob$-bounded operators from a vector lattice $X$ to a locally solid vector lattice $(E,\tau)$ is denoted by $B_{ob}(X,E)$, and it will be equipped with the topology of uniform convergence on order bounded sets. Recall that a net $(S_\alpha)$ of $ob$-bounded operators is said to {\em converge to zero uniformly} on an order bounded set $B$ if, for each zero neighborhood $V$ in $E$, there exists an index $\alpha_0$ such that $S_\alpha(B)\subseteq V$ for all $\alpha\geq\alpha_0$. We say that $(S_\alpha)$ converges to
$S$ uniformly on order bounded sets if $(S_\alpha-S)$ converges to zero uniformly on order bounded sets; see \cite[2.16]{Tr}. Also, $ob$-bounded operators will be equipped with the topology of equicontinuous convergence with order bounded sets. Recall that a net $(S_\alpha)$  of $ob$-bounded operators is said to {\em converge to zero equicontinuously} with order bounded sets if, for each zero neighborhood $V$ in $Y$, there is an order bounded set $B$ in $X$ such that, for every $\varepsilon>0$, there exists an index $\alpha_0$ such that $S_\alpha(B) \subseteq\varepsilon V$ for all $\alpha\geq\alpha_0$. We say that $(S_\alpha)$ converges to $S$ equicontinuously with order bounded sets if $(S_\alpha-S)$ converges to zero equicontinuously with order bounded sets; see \cite[2.18]{Tr}.
\section{Main results}\label{A2}

In the following two results, we show the continuity of addition and scalar multiplication with respect to the uniform convergence topology and the equicontinuous convergence topology, respectively.
\begin{thm}
The operations of addition and scalar multiplication are continuous in $B_{ob}(X,E)$ with respect to the uniform convergence topology on order bounded sets.
\end{thm}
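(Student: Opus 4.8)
The plan is to argue directly with the canonical neighbourhood base at the origin of the uniform‑convergence topology. For an order bounded set $B\subseteq X$ and a zero neighbourhood $V$ of $(E,\tau)$ (taken from the base $N_0$ described in Section~\ref{A1}), put
\[
W(B,V)=\{\,S\in B_{ob}(X,E):S(B)\subseteq V\,\};
\]
these sets form a base at $0$ for the topology in question, and $B_{ob}(X,E)$ is genuinely a vector space (closure under sums is Remark~\ref{some example of bounded operators}, while $(\lambda S)(B)=\lambda\,S(B)$ is $\tau$-bounded whenever $S(B)$ is). So it suffices to verify the usual $\varepsilon$--$\delta$ style continuity estimates for $+$ and $\cdot$ at an arbitrary point, phrased through the sets $W(B,V)$.

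For addition, fix a point $(S_0,T_0)$ and a basic neighbourhood $W(B,V)$ of $0$. I would pick $U\in N_0$ with $U+U\subseteq V$; then $(S+T)(B)\subseteq S(B)+T(B)$ gives at once $W(B,U)+W(B,U)\subseteq W(B,V)$, so $S\in S_0+W(B,U)$ and $T\in T_0+W(B,U)$ imply $S+T\in(S_0+T_0)+W(B,V)$. This step is pure bookkeeping with the axioms on $N_0$ and uses nothing about $ob$-boundedness.

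The real content is the continuity of scalar multiplication at a point $(\lambda_0,S_0)$. Fix a basic neighbourhood $W(B,V)$, choose $V_1\in N_0$ with $V_1+V_1\subseteq V$, an integer $n\geq|\lambda_0|+1$, and $U\in N_0$ with $\underbrace{U+\cdots+U}_{n}\subseteq V_1$; since $U$ is balanced this yields $\lambda U\subseteq V_1$ for every $|\lambda|\leq n$. Then I would split
\[
\lambda S-\lambda_0 S_0=\lambda(S-S_0)+(\lambda-\lambda_0)S_0 .
\]
When $|\lambda-\lambda_0|<1$ and $S-S_0\in W(B,U)$ we have $|\lambda|<n$, hence $\lambda(S-S_0)(B)\subseteq\lambda U\subseteq V_1$. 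For the remaining summand I would invoke the defining property of $B_{ob}(X,E)$: because $S_0$ is $ob$-bounded, $S_0(B)$ is $\tau$-bounded, so there is $\mu>0$ with $S_0(B)\subseteq\mu V_1$; then $|\lambda-\lambda_0|\leq1/\mu$ forces $(\lambda-\lambda_0)S_0(B)\subseteq V_1$ by balancedness of $V_1$. Taking $\delta=\min\{1,1/\mu\}$, every $\lambda$ with $|\lambda-\lambda_0|<\delta$ and every $S\in S_0+W(B,U)$ satisfy $(\lambda S-\lambda_0 S_0)(B)\subseteq V_1+V_1\subseteq V$, i.e. $\lambda S\in\lambda_0 S_0+W(B,V)$, which is the desired continuity.

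I expect the only genuine obstacle to be the term $(\lambda-\lambda_0)S_0$: absent a norm, there is no automatic control on the image $S_0(B)$, and the argument succeeds precisely because membership in $B_{ob}(X,E)$ guarantees $S_0(B)$ is $\tau$-bounded — this is exactly what the ``$ob$'' restriction buys. Everything else reduces to routine manipulations with the solid zero-neighbourhood base.
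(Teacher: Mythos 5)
Your argument is correct, and for scalar multiplication it is genuinely more complete than the paper's. For addition the two proofs coincide: both reduce to choosing $U$ with $U+U\subseteq V$ and observing $(S+T)(B)\subseteq S(B)+T(B)$; your reformulation $W(B,U)+W(B,U)\subseteq W(B,V)$ is just the translation-invariant packaging of the same computation. For scalar multiplication, however, the paper only checks behaviour at the origin: it takes a net $T_\alpha\to 0$ uniformly on order bounded sets together with a null sequence of scalars $\lambda_n$ and notes $\lambda_nT_\alpha(B)\subseteq\lambda_nV\subseteq V$, and the $ob$-boundedness hypothesis is never actually used. Your proof instead establishes joint continuity at an arbitrary point $(\lambda_0,S_0)$ via the decomposition $\lambda S-\lambda_0S_0=\lambda(S-S_0)+(\lambda-\lambda_0)S_0$, and correctly isolates $(\lambda-\lambda_0)S_0$ as the term that requires $S_0(B)$ to be $\tau$-bounded, i.e. requires $S_0\in B_{ob}(X,E)$ --- equivalently, that the basic sets $W(B,V)$ are absorbing in $B_{ob}(X,E)$. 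That is precisely the step needed to justify the theorem as stated (continuity of the operations on all of $B_{ob}(X,E)$, not merely at $0$), so your version subsumes the paper's computation and supplies the half it omits. The technical devices you use (an $n$-fold sum $U+\cdots+U\subseteq V_1$ so that $\lambda U\subseteq V_1$ for $|\lambda|\le n$, balancedness of the base sets, and the fact that a union of two order bounded sets is order bounded so that the $W(B,V)$ really do form a base at zero) are all legitimate under the axioms on $N_0$ listed in the introduction.
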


\begin{proof}
Suppose $(T_\alpha)$ and $(S_\alpha)$ are two nets of $ob$-bounded operators which are uniform convergent to zero on order bounded sets. Fix an arbitrary order bounded set $B$ in $X$. Consider a zero neighborhood $V$ in $E$ then there exists another zero neighborhood $U$ such that $U+U\subseteq V$. So that there are some indexes $\alpha_1$ and $\alpha_2$ such that $T_\alpha(B)\subseteq U$ for every $\alpha\geq\alpha_1$ and $S_\alpha(B)\subseteq U$ for each $\alpha\geq\alpha_2$. Since index set is directed, there is another index $\alpha_0$ such that $\alpha_0\geq\alpha_1$ and $\alpha_0\geq\alpha_1$. Hence, $T_\alpha(B)\subseteq U$ and $S_\alpha(B)\subseteq U$ for all $\alpha\geq\alpha_0$. Then we have
$$
(T_\alpha+S_\alpha)(B)\subseteq T_\alpha(B)+S_\alpha(B)\subseteq  U+U\subseteq V
$$
for each $\alpha\geq\alpha_0$. Therefore, since $B$ is arbitrary, we get that addition is continuous in $B_{ob}(E)$ with respect to the uniform convergence topology on order bounded sets.

Next, we show the continuity of the scalar multiplication. Consider the order bounded set $B$ in $E$ and a sequence of reals $\lambda_n$ which is convergent to zero. Since $(T_\alpha)$ is uniform convergent to zero on order bounded sets, for each zero neighborhood $V$ in $E$, there exists an index $\alpha_0$ such that $T_\alpha(B)\subseteq V$ for all $\alpha\geq\alpha_0$. For sufficiently large $n$, we have $\lvert\lambda_n\rvert\leq 1$, and so $\lambda_n V\subseteq V$. Then, for all $\alpha\geq\alpha_0$ and sufficiently large $n$, we have
$$
\lambda_nT(B)=T(\lambda_n B)\subseteq\lambda_n V \subseteq V.
$$
Therefore, we get the desired result.
\end{proof}

\begin{thm}
The operations of addition and scalar multiplication are continuous in $B_{ob}(X,E)$ with respect to the equicontinuous convergence topology with order bounded sets.
\end{thm}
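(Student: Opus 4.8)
The plan is to mirror the proof of the preceding theorem, the one genuine new point being that the defining condition for equicontinuous convergence fixes an order bounded set \emph{before} the tolerance $\varepsilon$ is chosen; hence, when adding two nets, one must produce a single order bounded set that works simultaneously for both. So let $(T_\alpha)$ and $(S_\alpha)$ be nets of $ob$-bounded operators that converge to zero equicontinuously with order bounded sets, fix a zero neighborhood $V$ in $E$, and pick a zero neighborhood $U$ with $U+U\subseteq V$, so that $\varepsilon U+\varepsilon U=\varepsilon(U+U)\subseteq\varepsilon V$ for every $\varepsilon>0$. Applying the hypothesis to the neighborhood $U$ gives order bounded sets $B_1,B_2\subseteq X$ such that: for each $\varepsilon>0$ there is an index past which $T_\alpha(B_1)\subseteq\varepsilon U$, and for each $\varepsilon>0$ there is an index past which $S_\alpha(B_2)\subseteq\varepsilon U$.

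The key step is to set $B:=B_1\cap B_2$; as a subset of the order bounded set $B_1$, it is order bounded. Given $\varepsilon>0$, choose $\alpha_1$ with $T_\alpha(B)\subseteq T_\alpha(B_1)\subseteq\varepsilon U$ for $\alpha\geq\alpha_1$ and $\alpha_2$ with $S_\alpha(B)\subseteq S_\alpha(B_2)\subseteq\varepsilon U$ for $\alpha\geq\alpha_2$, and take $\alpha_0\geq\alpha_1,\alpha_2$ using directedness of the index set. Then $(T_\alpha+S_\alpha)(B)\subseteq T_\alpha(B)+S_\alpha(B)\subseteq\varepsilon U+\varepsilon U\subseteq\varepsilon V$ for all $\alpha\geq\alpha_0$. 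Since $V$ was arbitrary, $(T_\alpha+S_\alpha)$ converges to zero equicontinuously with order bounded sets, which is exactly the continuity of addition in $B_{ob}(X,E)$ for this topology.

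For scalar multiplication the argument is essentially that of the previous theorem. Let $(T_\alpha)$ converge to zero equicontinuously with order bounded sets and let $(\lambda_n)$ be reals with $\lambda_n\to0$. Fix a zero neighborhood $V$ in $E$; the hypothesis yields an order bounded set $B\subseteq X$ such that for each $\varepsilon>0$ there is $\alpha_0$ with $T_\alpha(B)\subseteq\varepsilon V$ for all $\alpha\geq\alpha_0$. For all sufficiently large $n$ one has $\lvert\lambda_n\rvert\leq1$, whence $\lambda_n(\varepsilon V)=\varepsilon(\lambda_n V)\subseteq\varepsilon V$, and therefore $(\lambda_nT_\alpha)(B)=\lambda_n\bigl(T_\alpha(B)\bigr)\subseteq\lambda_n(\varepsilon V)\subseteq\varepsilon V$ for all $\alpha\geq\alpha_0$ and all sufficiently large $n$, giving the claimed continuity. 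I expect no real obstacle beyond the quantifier bookkeeping and the observation that $B_1\cap B_2$ is the common order bounded set needed in the additive part.
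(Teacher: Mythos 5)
Your proof is correct and follows essentially the same route as the paper: the same choice of $U$ with $U+U\subseteq V$, the same passage to the common order bounded set $B=B_1\cap B_2$ for the additive part, and the same $\lvert\lambda_n\rvert\leq 1$ absorption argument for scalar multiplication. The only difference is cosmetic — you make explicit the inclusion $T_\alpha(B)\subseteq T_\alpha(B_1)$ that the paper leaves implicit.
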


\begin{proof}
Let $(T_\alpha)$ and $(S_\alpha)$ be two nets of $ob$-bounded operators which are equicontinuous convergent to zero with order bounded sets. Fix an arbitrary zero neighborhood $V$. Then there is another zero neighborhood $U$ with $U+U\subseteq V$. So, there exist order bounded sets $B_1$, $B_2$ in $X$ such that, for every $\varepsilon>0$, there are indexes $\alpha_1$ and $\alpha_1$ such that $T_\alpha(B_1) \subseteq\varepsilon U$ for all $\alpha\geq\alpha_1$ and $S_\alpha(B_2) \subseteq\varepsilon U$ for each $\alpha\geq\alpha_2$. Take an index $\alpha_0$ such that $\alpha_0\geq\alpha_1$ and $\alpha_0\geq\alpha_1$. Hence, $T_\alpha(B_1)\subseteq\varepsilon U$ and $S_\alpha(B_2) \subseteq\varepsilon U$ for all $\alpha\geq\alpha_0$. Choose the order bounded set $B=B_1\cap B_2$ then we have 
$$
(T_\alpha+S_\alpha)(B)\subseteq T_\alpha(B_1)+S_\alpha(B_2)\subseteq=\varepsilon U+\varepsilon U\subseteq \varepsilon V
$$
for each $\alpha\geq\alpha_0$. Therefore, since $V$ is arbitrary, we get the desired result.

Now, we show the continuity of scalar multiplication. Consider the zero neighborhood $V$ and a sequence of reals $\lambda_n$ which is convergent to zero. Since $(T_\alpha)$ is equicontinuous convergent to zero with order bounded sets, there exists an order bounded set $B$ in $X$ such that, for every $\varepsilon>0$, there is an index $\alpha_0$ such that $T_\alpha(B)\subseteq\varepsilon V$ for each $\alpha\geq\alpha_0$. Also, for sufficiently large $n$, we have $\lvert\lambda_n\rvert\leq 1$, so that $\lambda_n V\subseteq V$. Then, for all $\alpha\geq\alpha_0$ and for fixed sufficiently large $n$, we have
$$
\lambda_nT_\alpha(B)=T_\alpha(\lambda_n B)\subseteq \lambda_n\varepsilon V \subseteq\varepsilon V.
$$
Therefore, we get the desired result.
\end{proof}

The next two results give the continuity of the product of $ob$-bounded operators  with respect to the uniform convergence topology and the equicontinuous convergence topology, respectively.
\begin{thm}
Let $(E,\tau)$ be locally solid vector lattice with an order bounded zero neighborhood. Then the product of $ob$-bounded operators is continuous in $B_{ob}(E)$ with respect to the uniform convergence topology on order bounded sets.
\end{thm}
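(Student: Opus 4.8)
The plan is to first extract exactly what the standing hypothesis on $E$ buys us, and then to run the usual bilinear--continuity argument. Fix once and for all an order bounded zero neighborhood $W$ of $E$, say $W\subseteq[-a,a]$ with $a\in E_+$. By \cite[Thm.2.2]{L} every $\tau$-bounded subset of $E$ is order bounded, and combined with \cite[Thm.2.19(i)]{AB} this shows that for operators on $E$ the notions ``$ob$-bounded'' and ``order bounded'' coincide; in particular the composition of two elements of $B_{ob}(E)$ is again order bounded and hence $ob$-bounded, so multiplication is a genuine internal operation on $B_{ob}(E)$. Moreover every $ob$-bounded $T$ on $E$ is automatically $\tau$-continuous: $T(W)$ is $\tau$-bounded, so given a zero neighborhood $V$ there is $\mu>0$ with $T(W)\subseteq\mu V$, whence $T(\tfrac1\mu W)\subseteq V$ and $\tfrac1\mu W$ is a zero neighborhood. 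So each element of $B_{ob}(E)$ is continuous and maps order intervals into order intervals; these are the two facts the argument will exploit.

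Next, I would reduce continuity of multiplication to the statement: if $(T_\alpha)\to T$ and $(S_\alpha)\to S$ uniformly on order bounded sets (with $T,S,T_\alpha,S_\alpha\in B_{ob}(E)$), then $(T_\alpha\circ S_\alpha)\to T\circ S$ uniformly on order bounded sets. The tool is the decomposition
$$
T_\alpha\circ S_\alpha-T\circ S=(T_\alpha-T)\circ(S_\alpha-S)+(T_\alpha-T)\circ S+T\circ(S_\alpha-S).
$$
I would fix an arbitrary order bounded set $B$ and zero neighborhood $V$, choose a zero neighborhood $U_1$ with $U_1+U_1+U_1\subseteq V$ (possible by the base axioms, since $0\in U_1$ forces $U_1\subseteq U$ whenever $U_1+U_1\subseteq U$), and then bound each of the three summands, evaluated on $B$, inside $U_1$ for all large $\alpha$.

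For $T\circ(S_\alpha-S)$ I would use continuity of $T$: pick a zero neighborhood $U'$ with $T(U')\subseteq U_1$; since $B$ is order bounded, $(S_\alpha-S)(B)\subseteq U'$ eventually, hence $T\big((S_\alpha-S)(B)\big)\subseteq U_1$ eventually. For $(T_\alpha-T)\circ S$ I would use that $S$ is order bounded: $S(B)\subseteq[-c,c]$ for some $c\in E_+$, an order bounded test set, so $(T_\alpha-T)\big(S(B)\big)\subseteq(T_\alpha-T)([-c,c])\subseteq U_1$ eventually. For the cross term I would use the order bounded zero neighborhood $W$: $(S_\alpha-S)(B)\subseteq W\subseteq[-a,a]$ eventually, while $(T_\alpha-T)([-a,a])\subseteq U_1$ eventually because $[-a,a]$ is a fixed order bounded set; since the index set is directed, for $\alpha$ beyond both indices $(T_\alpha-T)\big((S_\alpha-S)(B)\big)\subseteq(T_\alpha-T)([-a,a])\subseteq U_1$. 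Adding the three pieces gives $(T_\alpha\circ S_\alpha-T\circ S)(B)\subseteq U_1+U_1+U_1\subseteq V$ for all large $\alpha$, and as $B$ and $V$ were arbitrary this finishes the proof.

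The only genuinely delicate point, and the one that forces the hypothesis, is the cross term $(T_\alpha-T)\circ(S_\alpha-S)$: there both factors move with $\alpha$, so neither can be ``frozen'', and a naive estimate fails because $(T_\alpha-T)$ converges to $0$ only on order bounded sets, whereas $(S_\alpha-S)(B)$ is a priori controlled only inside arbitrary, possibly non-order-bounded, zero neighborhoods. The order bounded zero neighborhood $W$ is what resolves this, by trapping the moving set $(S_\alpha-S)(B)$ inside the single fixed order interval $[-a,a]$, which is then an admissible test set for the uniform convergence of $(T_\alpha-T)$. (If one wants only the weaker assertion that $T_\alpha\to0$ and $S_\alpha\to0$ imply $T_\alpha\circ S_\alpha\to0$, then only this cross-term estimate is needed and the other two summands drop out.)
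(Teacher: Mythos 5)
Your proof is correct, and it is in fact considerably more complete than the paper's own argument, so the two are worth contrasting. The paper only treats the case $T_\alpha\to 0$, $S_\alpha\to 0$ and verifies $S_\alpha\big(T_\alpha([-a,a])\big)\subseteq V$ on the single order interval $[-a,a]$ attached to the chosen neighborhood $V$ (tacitly replacing $V$ by its intersection with the order bounded zero neighborhood, since an arbitrary zero neighborhood need not be order bounded); it never carries out the decomposition required to pass from the zero case to joint continuity at a general pair $(T,S)$. You do exactly that: the identity $T_\alpha S_\alpha-TS=(T_\alpha-T)(S_\alpha-S)+(T_\alpha-T)S+T(S_\alpha-S)$, the continuity of $T$ extracted from $ob$-boundedness plus the order bounded neighborhood $W$, the order boundedness of $S(B)$, and the trapping of $(S_\alpha-S)(B)$ inside the fixed interval $[-a,a]$ to handle the term in which both factors move. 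Your preliminary observation that on such an $E$ the classes of $ob$-bounded and order bounded operators coincide, which is what makes composition an internal operation on $B_{ob}(E)$, appears in the paper's introduction but is not exploited in its proof. The three-fold splitting $U_1+U_1+U_1\subseteq V$ and each of the three eventual inclusions check out. One small quibble with your closing commentary: the order bounded zero neighborhood is not needed \emph{only} for the cross term --- your estimate of $(T_\alpha-T)\circ S$ also relies on it, since the order boundedness of $S(B)$ (rather than its mere $\tau$-boundedness) is precisely what makes $S(B)$ an admissible test set for the uniform convergence of $(T_\alpha-T)$, and that step again rests on \cite[Thm.2.2]{L}; this does not affect the validity of the proof, only the accompanying remark about where the hypothesis is forced.
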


\begin{proof}
Suppose $(T_\alpha)$ and $(S_\alpha)$ which are uniform convergent to zero on order bounded sets are two nets of $ob$-bounded operators. Fix a zero neighborhood $V$ in $E$, so there is $a\in E_+$ such that $V\subseteq [-a,a]$; see \cite[Thm.2.2]{L}. Since $(T_\alpha)$ is uniform convergent to zero, there exists an index $\alpha_1$ such that $T_\alpha([-a,a])\subseteq V$ for all $\alpha\geq\alpha_1$. Also, there is an index $\alpha_2$ such that  $S_\alpha([-a,a])\subseteq V$ for all $\alpha\geq\alpha_2$. Since there is an index $\alpha_0$ with $\alpha_0\geq\alpha_1$ and $\alpha_0\geq\alpha_2$, we have
$$
S_\alpha\big(T_\alpha([-a,a])\big)\subseteq S_\alpha(V)\subseteq  S_\alpha([-a,a])\subseteq V
$$
for each $\alpha\geq\alpha_0$. Therefore, we get the desired result.
\end{proof}

\begin{thm}
Let $(E,\tau)$ be locally solid vector lattice with an order bounded zero neighborhood. Then the product of $ob$-bounded operators is continuous in $B_{ob}(E)$ with respect to the equicontinuous convergence topology with order bounded sets.
\end{thm}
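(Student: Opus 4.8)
The plan is to copy the proof of the preceding theorem almost verbatim, the whole point being that, in the presence of an order bounded zero neighborhood, a single order interval $[-a,a]$ can serve at once as an order bounded test set and as a zero neighborhood, so that the image $T_\alpha([-a,a])$ falls back inside $[-a,a]$ before $S_\alpha$ is applied to it.

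First I would fix a zero neighborhood $V$ in $E$ and, exactly as in the previous proof (after shrinking $V$ if necessary, and using \cite[Thm.2.2]{L}), produce a zero neighborhood $V_0\subseteq V$ and an element $a\in E_+$ with $V_0\subseteq[-a,a]$; since $V_0$ is a neighborhood contained in $[-a,a]$, the order interval $[-a,a]$ is itself a zero neighborhood. I would take $B:=[-a,a]$ as the order bounded set exhibiting the equicontinuous convergence of $(S_\alpha\circ T_\alpha)$ to zero. Given $\varepsilon>0$, choose $\delta,\eta>0$ with $\delta\eta\le\varepsilon$. By the equicontinuous convergence of $(T_\alpha)$ (applied to the neighborhood $[-a,a]$) there is an index $\alpha_1$ with $T_\alpha([-a,a])\subseteq\delta[-a,a]$ for all $\alpha\ge\alpha_1$, and by the equicontinuous convergence of $(S_\alpha)$ (applied to $V_0$) there is an index $\alpha_2$ with $S_\alpha([-a,a])\subseteq\eta V_0$ for all $\alpha\ge\alpha_2$. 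Picking $\alpha_0\ge\alpha_1,\alpha_2$ and using linearity of $S_\alpha$, for every $\alpha\ge\alpha_0$ we get
\[
S_\alpha\big(T_\alpha([-a,a])\big)\subseteq S_\alpha\big(\delta[-a,a]\big)=\delta\,S_\alpha([-a,a])\subseteq\delta\eta\,V_0\subseteq\varepsilon V .
\]
As $B=[-a,a]$ does not depend on $\varepsilon$, this says precisely that $(S_\alpha\circ T_\alpha)$ converges to zero equicontinuously with order bounded sets, and since $V$ was arbitrary this would finish the proof.

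The step I expect to be the real obstacle is the one I slipped past above: the definition of equicontinuous convergence hands us, for the neighborhood under consideration, only \emph{some} order bounded test set, which may be strictly ``thinner'' than the order interval $[-a,a]$, and it is not automatic that the convergence of $(T_\alpha)$ and of $(S_\alpha)$ can be read off on $[-a,a]$ itself. This is exactly the place where the hypothesis that $E$ has an order bounded zero neighborhood $W$ must be used in earnest: one should show that every order bounded subset of $E$ is absorbed by a scalar multiple of $W$, so that the order intervals containing $W$ — which are the order intervals that happen to be zero neighborhoods — are cofinal among order bounded sets, and the convergence witnessed on the supplied test set can then be transported to $[-a,a]$ at the cost of a harmless rescaling absorbed into the $\varepsilon$. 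Should this transport not be available, the alternative I would try is to keep $B$ equal to the test set $B_S$ provided by the convergence of $(S_\alpha)$ and to force $T_\alpha(B)$ into a fixed dilate of $B_S$ using the $ob$-boundedness of $T_\alpha$; there the delicate point becomes the uniformity in $\alpha$ of the absorbing constant, which once more is what the order bounded zero neighborhood is needed to supply.
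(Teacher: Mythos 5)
You have correctly located the crux, but neither of your proposed repairs closes it, so the argument as written has a genuine gap. Equicontinuous convergence of $(S_\alpha)$ hands you one particular order bounded test set $B_2$ on which $S_\alpha$ is eventually small; to conclude anything about $S_\alpha\big(T_\alpha([-a,a])\big)$ you would need $T_\alpha([-a,a])$ (or $T_\alpha(B)$ for whichever $B$ you exhibit) to land in a \emph{fixed dilate of $B_2$}. Your ``transport'' runs the absorption in the wrong direction: the order bounded zero neighborhood gives $B_2\subseteq\mu[-a,a]$ (every order bounded set is absorbed by every zero neighborhood), whereas what you need is $[-a,a]\subseteq\mu B_2$, and that fails badly --- $B_2$ may be a singleton, or a segment spanning a proper subspace, in which case $S_\alpha(B_2)\subseteq\varepsilon V$ carries no information about $S_\alpha$ on $[-a,a]$. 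The same objection defeats your fallback: $T_\alpha(B_2)$ does eventually lie in an arbitrarily small zero neighborhood, but a zero neighborhood is never contained in a dilate of $B_2$ unless $B_2$ is itself absorbing, which an order bounded set need not be.

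For what it is worth, the paper's own proof takes the route you describe as the alternative --- it keeps $B=B_1\cap B_2$ and chains $S_\alpha\big(T_\alpha(B)\big)\subseteq S_\alpha(\varepsilon V)\subseteq\varepsilon S_\alpha([-a,a])$ --- and then silently asserts $S_\alpha([-a,a])\subseteq\lambda V$, which is precisely the unproved claim ``the convergence of $(S_\alpha)$ can be read off on $[-a,a]$'' that you flagged. (It also writes $V\subseteq[-a,a]$ for an arbitrary zero neighborhood $V$; your shrinking to $V_0\subseteq V\cap W$ is the correct move there.) So your diagnosis of where the order bounded zero neighborhood must do real work is accurate, and your computation is fine granted the two inputs $T_\alpha([-a,a])\subseteq\delta[-a,a]$ and $S_\alpha([-a,a])\subseteq\eta V_0$; but those inputs are not consequences of the stated definition of equicontinuous convergence, and the gap remains open in your write-up just as it does in the paper.
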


\begin{proof}
Let $(T_\alpha)$ and $(S_\alpha)$ be nets of $ob$-bounded operators which are equicontinuous convergent to zero with order bounded sets. Fix a zero neighborhood $V$ in $E$ and, by applying \cite[Thm.2.2]{L}, there is $a\in E_+$ such that $V\subseteq [-a,a]$. On the other hand, there exist order bounded sets $B_1$ and $B_2$ in $X$ such that, for every $\varepsilon>0$, there are indexes $\alpha_1$ and $\alpha_2$ such that $T_\alpha(B_1)\subseteq\varepsilon V$ for each $\alpha\geq \alpha_1$ and $S_\alpha(B_2)\subseteq\varepsilon V$ for all $\alpha\geq\alpha_2$. Take an index $\alpha_0$ with $\alpha_0\geq\alpha_1$ and $\alpha_0\geq\alpha_2$, and order bounded set $B=B_1\cap B_2$. Then, for each $\alpha\geq\alpha_0$, we have 
$$
S_\alpha\big(T_\alpha(B)\big)\subseteq S_\alpha(\varepsilon V)=\varepsilon S_\alpha(V) \subseteq \varepsilon S_\alpha([-a,a]).
$$
By \cite[Thm.2.19(i)]{AB}, there is a positive scalar $\lambda$ such that $[-a,a]\subseteq \lambda V$. Then, for given positive scalar $\frac{\varepsilon}{\lambda}$, there exists an index $\alpha_3$ such that $T_\alpha(B)\subseteq\frac{\varepsilon}{\lambda} V$ and $S_\alpha(B)\subseteq\frac{\varepsilon}{\lambda} V$ for each $\alpha\geq \alpha_3$. Thus, we have 
$$
S_\alpha\big(T_\alpha(B)\big)\subseteq S_\alpha(\frac{\varepsilon}{\lambda}V) \subseteq \frac{\varepsilon}{\lambda} S_\alpha([-a,a])\subseteq \frac{\varepsilon}{\lambda} \lambda V=\varepsilon V
$$
for all $\alpha\geq \alpha_3$.
\end{proof}

In the following two works, we show the lattice operations are continuous with respect to the uniform convergence topology and the equicontinuous convergence topology, respectively.
\begin{thm}\label{lattice operations is cont.}
Let $X$ be a vector lattice and $(E,\tau)$ locally solid vector lattice with order complete property and an order bounded zero neighborhood. Then the lattice operations in $B_{ob}(X,E)$ are continuous with respect to the uniform convergence topology on order bounded sets.
\end{thm}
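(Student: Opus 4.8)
The plan is to first identify the space $B_{ob}(X,E)$, then reduce continuity of the lattice operations to \emph{local solidity} of the uniform convergence topology, and finally verify local solidity by writing down an explicit base of solid zero neighbourhoods.

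First I would observe that the hypotheses force $B_{ob}(X,E)$ to coincide with the space $B_b(X,E)$ of order bounded operators: since $E$ has an order bounded zero neighbourhood, every $ob$-bounded operator $X\to E$ is order bounded, as already noted in Section~\ref{A1}. Since $E$ is moreover order complete, the Riesz--Kantorovich theorem (see \cite{AB}) makes $B_b(X,E)$ an order complete vector lattice whose modulus is, for $u\in X_+$, given by
$$
\lvert S\rvert(u)=\sup\{\,\lvert S(y)\rvert:\ y\in X,\ \lvert y\rvert\le u\,\}.
$$
In particular the lattice operations in the statement make sense. Recalling that a locally solid linear topology on a vector lattice automatically has continuous lattice operations — an immediate consequence of $\lvert\,\lvert x\rvert-\lvert y\rvert\,\rvert\le\lvert x-y\rvert$ together with solidity of the neighbourhoods, which is what makes $(E,\tau)$ a topological vector lattice in the first place — it then suffices to show that the topology of uniform convergence on order bounded sets on $B_{ob}(X,E)$ is locally solid.

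For that I would produce a base of solid zero neighbourhoods. Since every order bounded subset of $X$ lies in some interval $[-u,u]$ with $u\in X_+$, and since the solid \emph{order bounded} zero neighbourhoods of $E$ form a base (intersect a solid neighbourhood with the given order bounded one), the sets $\mathcal N(u,V)=\{S:\ S([-u,u])\subseteq V\}$, with $u\in X_+$ and $V$ solid and order bounded, form — with finite intersections — a base for the uniform convergence topology. Alongside these I would consider $\mathcal M(u,V)=\{S:\ \lvert S\rvert(u)\in V\}$ and check three things: (1) $\mathcal M(u,V)$ is solid, which is immediate, since $\lvert S\rvert\le\lvert R\rvert$ gives $0\le\lvert S\rvert(u)\le\lvert R\rvert(u)$, so $\lvert R\rvert(u)\in V$ forces $\lvert S\rvert(u)\in V$ by solidity of $V$; (2) $\mathcal M(u,V)\subseteq\mathcal N(u,V)$, because for $x\in[-u,u]$ one has $\lvert S(x)\rvert\le\lvert S\rvert(\lvert x\rvert)\le\lvert S\rvert(u)\in V$, hence $S(x)\in V$ by solidity; and (3) some $\mathcal N(u,W)$ sits inside $\mathcal M(u,V)$. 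For (3) I would write $V\subseteq[-b,b]$ and invoke \cite[Thm.2.19(i)]{AB} to get $[-b,b]\subseteq\mu V$ for a scalar $\mu\ge1$; then $W=\frac1\mu V$ works, for if $S([-u,u])\subseteq W\subseteq\frac1\mu[-b,b]$ then $\lvert S(y)\rvert\le\frac1\mu b$ whenever $\lvert y\rvert\le u$, whence $\lvert S\rvert(u)=\sup\{\lvert S(y)\rvert:\lvert y\rvert\le u\}\le\frac1\mu b$ and therefore $\lvert S\rvert(u)\in\frac1\mu[-b,b]\subseteq V$. Thus the solid sets $\mathcal M(u,V)$ form a base of zero neighbourhoods, the topology is locally solid, and the theorem follows.

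The hard part will be step (3): it is precisely where all three hypotheses on $E$ are needed simultaneously — order completeness so that $\lvert S\rvert(u)$ exists and equals the Riesz--Kantorovich supremum, the order bounded zero neighbourhood so that $V$ can be trapped inside an order interval $[-b,b]$, and \cite[Thm.2.19(i)]{AB} so that conversely $[-b,b]$ is trapped inside a scalar multiple of $V$. I would also want to double-check the (routine) folklore fact used at the end of the first step, that local solidity yields continuous lattice operations, since the whole argument rests on it.
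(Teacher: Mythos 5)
Your proof is correct, but it takes a genuinely different route from the paper. The paper argues directly on the nets: it applies the Riesz--Kantorovich formula to $(T_\alpha\vee S_\alpha)(x)-(T\vee S)(x)$ for $x$ in a fixed order bounded set, bounds this difference above by $(T_\alpha-T)(x)+(S_\alpha-S)(x)$, and concludes membership in a neighbourhood $V$ once both nets are uniformly small; note that as written this only produces an \emph{upper} estimate, and one still needs the symmetric lower estimate plus solidity of $V$ to actually conclude $(T_\alpha\vee S_\alpha)(x)-(T\vee S)(x)\in V$ --- a step your argument sidesteps entirely. You instead prove the stronger structural statement that the uniform convergence topology on $B_{ob}(X,E)=B_b(X,E)$ is \emph{locally solid}, by exhibiting the solid base $\mathcal M(u,V)=\{S:\lvert S\rvert(u)\in V\}$ and sandwiching it between the standard neighbourhoods $\mathcal N(u,\tfrac1\mu V)\subseteq\mathcal M(u,V)\subseteq\mathcal N(u,V)$; continuity (indeed uniform continuity) of all lattice operations then follows from the standard fact \cite[Thm.~2.17]{AB} about locally solid topologies, whose binary case uses Birkhoff's inequality $\lvert x\vee z-y\vee w\rvert\le\lvert x-y\rvert+\lvert z-w\rvert$ together with a $U+U\subseteq V$ refinement --- you should cite that rather than only the inequality for the modulus. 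Your approach is cleaner, isolates exactly where each hypothesis enters (order completeness for the existence of $\lvert S\rvert(u)$ as the Riesz--Kantorovich supremum, the order bounded neighbourhood for $V\subseteq[-b,b]$, and \cite[Thm.~2.19(i)]{AB} for $[-b,b]\subseteq\mu V$), and is closer in spirit to \cite{Z}; its one cosmetic blemish is the parenthetical claim that intersecting a solid neighbourhood with an order bounded one yields a solid set --- intersect with the order interval $[-b_0,b_0]$ (which is solid) rather than with the order bounded neighbourhood itself.
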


\begin{proof}
Assume $(T_\alpha)$ and $(S_\alpha)$ are two nets of $ob$-bounded operators which are uniform convergent to the linear operators $T$ and $S$ on order bounded sets, respectively. For each $x\in X_+$, by applying the Riesz–Kantorovich formula, we have
\begin{eqnarray}
\big(T\vee S\big)(x)=\sup\{Tu+Sv:u,v\geq0,u+v=x\}.
\end{eqnarray}

Fix an order bounded set $B$ and fix $x\in B_+$. Suppose $u$, $v$ are positive elements such that $x=u+v$, and so that $u,v\in B$. Also, for two subsets $A_1$, $A_2$ in a vector lattice, we have $\sup(A_1)-\sup(A_2)\leq \sup(A_1-A_2)$. Then we get
\begin{eqnarray*}
	\big(T_\alpha\vee S_\alpha\big)(x)-\big(T\vee S\big)(x)&=&\sup\{T_\alpha u+S_\alpha v:u,v\geq0,u+v=x\}\\&& -\sup\{Tu+Sv:u,v\geq0,u+v=x\}\\&\leq&\sup\{(T_\alpha-T) u+(S_\alpha-S) v:u,v\geq0,u+v=x\}.
\end{eqnarray*}
For given a zero neighborhood $V$ in $E$, pick a zero neighborhood $U$ with $U+U\subseteq V$. Thus, there are some indexes $\alpha_1$ and $\alpha_2$ such that $(T_\alpha-T)(B)\subseteq U$ for every $\alpha\geq\alpha_1$ and $(S_\alpha-S)(B)\subseteq U$ for each $\alpha\geq\alpha_2$. There exists another index $\alpha_0$ such that $\alpha_0\geq\alpha_1$ and $\alpha_0\geq\alpha_1$, so that $(T_\alpha-T)(B)\subseteq U$ and $(S_\alpha-S)\subseteq U$ for all $\alpha\geq\alpha_0$. Then, for each $\alpha\geq\alpha_0$, we have
$$
\big(T_\alpha\vee S_\alpha\big)(x)-\big(T\vee S\big)(x)\leq \big(T_\alpha-T\big)(x)+\big(S_\alpha-S\big)(x)\subseteq U+U\subseteq V.
$$
Therefore, we have $\big(T_\alpha\vee S_\alpha\big)(B)-\big(T\vee S\big)(B)\subseteq V$ for each $\alpha\geq\alpha_0$, and so we get the desired result.
\end{proof}

\begin{ques}
Does the Theorem \ref{lattice operations is cont.} hold without order bounded zero neighborhood?
\end{ques}

\begin{thm}\label{lattice operations is equi. cont.}
Let $X$ be a vector lattice and $(E,\tau)$ locally solid vector lattice with order complete property and an order bounded zero neighborhood. Then the lattice operations in $B_{ob}(X,E)$ are continuous with respect to the equicontinuous convergence topology with order bounded sets.
\end{thm}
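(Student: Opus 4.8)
The plan is to follow the proof of Theorem~\ref{lattice operations is cont.} closely, the only genuinely new feature being the extra scale parameter $\varepsilon$ that enters the definition of equicontinuous convergence. First I would fix nets $(T_\alpha)$ and $(S_\alpha)$ of $ob$-bounded operators converging equicontinuously with order bounded sets to linear operators $T$ and $S$. Because $(E,\tau)$ has an order bounded zero neighborhood, every $ob$-bounded operator $X\to E$ is order bounded (as recalled in Section~\ref{A1}), and since $E$ is order complete the space of such operators is a vector lattice; in particular $T,S,T_\alpha,S_\alpha$, as well as $T\vee S$ and each $T_\alpha\vee S_\alpha$, all lie in $B_{ob}(X,E)$, and the Riesz--Kantorovich formula gives $(T\vee S)(x)=\sup\{Tu+Sv:u,v\ge0,\ u+v=x\}$ for $x\in X_+$.

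The body of the proof then runs parallel to Theorem~\ref{lattice operations is cont.}. Using $\sup(A_1)-\sup(A_2)\le\sup(A_1-A_2)$ I would first record, for $x\in X_+$,
\[
\big(T_\alpha\vee S_\alpha\big)(x)-\big(T\vee S\big)(x)\le\sup\{(T_\alpha-T)u+(S_\alpha-S)v:u,v\ge0,\ u+v=x\},
\]
and by the symmetric computation the same supremum also dominates $\big(T\vee S\big)(x)-\big(T_\alpha\vee S_\alpha\big)(x)$. Next I would fix an arbitrary zero neighborhood $V$ in $E$, pick a solid zero neighborhood $U$ with $U+U\subseteq V$, and invoke equicontinuous convergence of $(T_\alpha)$ to $T$ and of $(S_\alpha)$ to $S$ to obtain order bounded sets $B_1,B_2\subseteq X$ with the property that for every $\varepsilon>0$ there are indices $\alpha_1,\alpha_2$ such that $(T_\alpha-T)(B_1)\subseteq\varepsilon U$ for $\alpha\ge\alpha_1$ and $(S_\alpha-S)(B_2)\subseteq\varepsilon U$ for $\alpha\ge\alpha_2$. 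I would then set $B=B_1\cap B_2$, fix $\varepsilon>0$ and an index $\alpha_0$ dominating $\alpha_1,\alpha_2$, and for $x\in B_+$ and a decomposition $x=u+v$ with $u,v\ge0$ (so that $u,v\in B$, exactly as in Theorem~\ref{lattice operations is cont.}) obtain $(T_\alpha-T)u,(S_\alpha-S)v\in\varepsilon U$, whence every term of the supremum above lies in $\varepsilon U+\varepsilon U\subseteq\varepsilon V$; passing to the supremum over decompositions and using solidity of $\varepsilon V$ yields $|(T_\alpha\vee S_\alpha-T\vee S)(x)|\in\varepsilon V$, i.e.\ $(T_\alpha\vee S_\alpha-T\vee S)(B)\subseteq\varepsilon V$ for all $\alpha\ge\alpha_0$. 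Since $\varepsilon>0$ was arbitrary and $B$ depends only on $V$, this is exactly equicontinuous convergence of $(T_\alpha\vee S_\alpha)$ to $T\vee S$; the meet then follows from $T\wedge S=-\big((-T)\vee(-S)\big)$ and the continuity of scalar multiplication established earlier.

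I expect the main obstacle to be the same one present already in Theorem~\ref{lattice operations is cont.}: the passage from ``each term $(T_\alpha-T)u+(S_\alpha-S)v$ of the Riesz--Kantorovich supremum lies in $\varepsilon V$'' to ``the supremum itself lies in $\varepsilon V$'', since a supremum of a subset of a solid set need not lie in that set. I would handle it exactly as in Theorem~\ref{lattice operations is cont.}, bounding the supremum directly by $(T_\alpha-T)(x)+(S_\alpha-S)(x)$ (equivalently by $|T_\alpha-T|(x)+|S_\alpha-S|(x)$, using monotonicity of the moduli), so that only a single evaluation---already known to lie in $\varepsilon U+\varepsilon U\subseteq\varepsilon V$---must be controlled; this is where order completeness of $E$ (for the moduli and the Riesz--Kantorovich representation) and the presence of an order bounded zero neighborhood (to make $ob$-bounded operators order bounded) are used. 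The rest is the routine bookkeeping of replacing the single threshold index of the uniform-convergence proof by the $\varepsilon$-indexed family $\alpha_0(\varepsilon)$.
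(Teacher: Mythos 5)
Your proposal follows essentially the same route as the paper's own proof: the Riesz--Kantorovich formula, the inequality $\sup(A_1)-\sup(A_2)\le\sup(A_1-A_2)$, the choice of a solid $U$ with $U+U\subseteq V$, the order bounded set $B=B_1\cap B_2$, and the bound of the supremum by $(T_\alpha-T)(x)+(S_\alpha-S)(x)$ are all exactly the steps the paper takes. Your additional remarks (the symmetric estimate for the reverse difference, the reduction of the meet to the join, and the caveat about the modulus) only make explicit points the paper leaves implicit.
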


\begin{proof}
Let $(T_\alpha)$ and $(S_\alpha)$ be nets of $ob$-bounded operators which are equicontinuous convergent to the linear operators $T$ and $S$ with order bounded sets, respectively. By the Riesz–Kantorovich formula, we have
$$
\big(T\vee S\big)(x)=\sup\{Tu+Sv:u,v\geq0,u+v=x\}.
$$
for ever $x\in X_+$. Fix a zero neighborhood $V$ in $E$. Take a zero neighborhood $U$ with $U+U\subseteq V$. There are order bounded sets $B_1$ and $B_2$ in $X$ such that, for every $\varepsilon>0$, there exist indexes $\alpha_1$ and $\alpha_2$ such that $T_\alpha(B_1)\subseteq\varepsilon U$ for each $\alpha\geq \alpha_1$ and $S_\alpha(B_2)\subseteq\varepsilon U$ for all $\alpha\geq\alpha_2$. Pick an index $\alpha_0$ with $\alpha_0\geq\alpha_1$ and $\alpha_0\geq\alpha_2$, and order bounded set $B=B_1\cap B_2$. Hence, $T_\alpha(B)\subseteq\varepsilon U$ and $S_\alpha(B)\subseteq\varepsilon U$ for all $\alpha\geq \alpha_0$. Fix $x\in B_+$, suppose $u$, $v$ are positive elements such that $x=u+v$, and so that $u,v\in B$. Then we get
\begin{eqnarray*}
\big(T_\alpha\vee S_\alpha\big)(x)-\big(T\vee S\big)(x)&=&\sup\{T_\alpha u+S_\alpha v:u,v\geq0,u+v=x\}\\&& -\sup\{Tu+Sv:u,v\geq0,u+v=x\}\\&\leq&\sup\{(T_\alpha-T) u+(S_\alpha-S) v:u,v\geq0,u+v=x\}.
\end{eqnarray*} 
Then, for each $\alpha\geq\alpha_0$, we have
$$
\big(T_\alpha\vee S_\alpha\big)(x)-\big(T\vee S\big)(x)\leq \big(T_\alpha-T\big)(x)+\big(S_\alpha-S\big)(x)\subseteq \varepsilon U+\varepsilon U\subseteq \varepsilon V.
$$
Therefore, we get the result.
\end{proof}

\begin{ques}
Does the Theorem \ref{lattice operations is equi. cont.} hold without order bounded zero neighborhood?
\end{ques}
 
We finish this paper with the following results which show that $B_{ob}(X,E)$ is topologically complete algebras with respect to the assigned topologies.
\begin{prop}\label{t is also ob-bounded}
Let $(T_\alpha)$ be a net of $ob$-bounded operators in $B_{ob}(X,E)$ which uniform convergent to the linear operators $T$ on order bounded sets. Then $T$ is $ob$-bounded.
\end{prop}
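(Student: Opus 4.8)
The plan is to fix a single index coming from the uniform convergence and reduce $\tau$-boundedness of $T(B)$ to $\tau$-boundedness of $T_{\alpha_0}(B)$, which is available because $T_{\alpha_0}$ is $ob$-bounded. First I would fix an arbitrary order bounded set $B$ in $X$ and an arbitrary zero neighborhood $V$ in $E$; the goal is to produce a positive scalar $\lambda$ with $T(B)\subseteq\lambda V$. Choose a zero neighborhood $U$ with $U+U\subseteq V$; recall that, by the standing assumptions on our base of zero neighborhoods, $U$ is balanced, so $U=-U$ and $U\subseteq\mu U$ for every scalar $\mu$ with $|\mu|\geq 1$.

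Next I would invoke uniform convergence: since $(T_\alpha-T)$ converges to zero uniformly on order bounded sets, there is an index $\alpha_0$ with $(T_\alpha-T)(B)\subseteq U$ for all $\alpha\geq\alpha_0$; in particular $(T_{\alpha_0}-T)(B)\subseteq U$. Now $T_{\alpha_0}$ is $ob$-bounded, so $T_{\alpha_0}(B)$ is $\tau$-bounded, hence there is a scalar $\lambda\geq 1$ with $T_{\alpha_0}(B)\subseteq\lambda U$. Then for each $x\in B$,
$$
T(x)=T_{\alpha_0}(x)-(T_{\alpha_0}-T)(x)\in T_{\alpha_0}(B)-(T_{\alpha_0}-T)(B)\subseteq \lambda U-U=\lambda U+U\subseteq \lambda U+\lambda U\subseteq\lambda(U+U)\subseteq\lambda V,
$$
using $U=-U$, then $U\subseteq\lambda U$, and finally $U+U\subseteq V$. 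Thus $T(B)\subseteq\lambda V$, and since $V$ was arbitrary, $T(B)$ is $\tau$-bounded; since $B$ was arbitrary, $T$ is $ob$-bounded. (Linearity of $T$, needed for the additivity step, is part of the hypothesis.)

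The argument is essentially routine, so there is no serious obstacle; the only point requiring a little care is the scalar bookkeeping with the balanced neighborhood $U$ — ensuring $-U=U$ and $U\subseteq\lambda U$ for $\lambda\geq 1$ so that the single scalar $\lambda$ absorbs $T(B)$ into $\lambda V$. An alternative, if one prefers to avoid choosing $U$ with $U+U\subseteq V$, is to work directly with a solid zero neighborhood $U\subseteq V$ and use $|T(x)|\leq|T_{\alpha_0}(x)|+|(T_{\alpha_0}-T)(x)|$ together with solidity; either route gives the claim.
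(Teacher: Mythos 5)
Your proposal is correct and follows essentially the same route as the paper: use uniform convergence to write $T(B)\subseteq T_{\alpha_0}(B)+U$ for a single index $\alpha_0$ and then let the $ob$-boundedness of $T_{\alpha_0}$ absorb everything into a multiple of $V$. In fact your version is slightly more careful than the paper's, which concludes by citing ``the sum of two bounded sets is bounded'' applied to $T_{\alpha_0}(B)+V$ even though the zero neighborhood $V$ need not be bounded; your explicit bookkeeping with a balanced $U$ satisfying $U+U\subseteq V$ and the scalar $\lambda\geq 1$ is exactly what is needed to make that step airtight.
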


\begin{proof}
Let $B$ be an order bounded set. For given a zero neighborhood $V$ in $E$, there exists an index $\alpha_0$ such that  $\big(T-T_\alpha\big)(B)\subseteq V$ for all $\alpha\geq\alpha_0$. So, we have 
$$
T(B)\subseteq T_{\alpha_0}(B)+V.
$$ 
Since $T_{\alpha_0}$ is $ob$-bounded, $T_{\alpha_0}(B)$ is bounded in $E$. Also, since the sum of two bounded sets in a topological vector space is bounded, $T(B)$ is also bounded.
\end{proof}

\begin{ques}
Let $(T_\alpha)$ be a net of $ob$-bounded operators in $B_{ob}(X,E)$. Is $T$ $ob$-bounded operator whenever $(T_\alpha)$ equicontinuous convergent to the linear operators $T$ with order bounded sets?
\end{ques}

A net $(S_\alpha)$ on a locally solid vector lattice $(E,\tau)$ is said to {\em order converges to zero uniformly} on a bounded set $B$ if, for each $a\in E_+$, there is an $\alpha_0$ with $S_\alpha(B)\subseteq [-a,a]$ for each $\alpha\geq \alpha_0$; see \cite{Z}.
\begin{prop}\label{t is o}
Let $(T_\alpha)$ be a net of $ob$-bounded operators between locally solid vector lattices $(X,\acute{\tau})$ and $(E,\tau)$. If $(T_\alpha)$ order convergent
uniformly on zero neighborhood to the linear operator $T$ then $T$ is also $ob$-bounded.  
\end{prop}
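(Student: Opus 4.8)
The plan is to run essentially the argument of Proposition~\ref{t is also ob-bounded}, inserting one extra rescaling step to compensate for the fact that the convergence is now controlled only on a single zero neighborhood of $X$ rather than on every order bounded set. First I would unwind the hypothesis: it provides a zero neighborhood $U$ in $(X,\acute{\tau})$ such that $(T_\alpha-T)$ order converges to zero uniformly on $U$, i.e. for each $a\in E_+$ there is an index $\alpha_0$ with $(T_\alpha-T)(U)\subseteq[-a,a]$ for all $\alpha\geq\alpha_0$. Since $\acute{\tau}$ is locally solid, I would replace $U$ by a solid zero neighborhood contained in it, so that I may assume $U$ is solid (the inclusion above only shrinks).

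I would then fix an arbitrary order bounded set $B\subseteq X$, say $B\subseteq[-x_0,x_0]$ with $x_0\in X_+$. As $U$ is a neighborhood of zero it is absorbing, so there is $\lambda>0$ with $x_0\in\lambda U$; since $\lambda U$ is solid this forces $[-x_0,x_0]\subseteq\lambda U$, hence $B\subseteq\lambda U$. Picking any $a\in E_+$ and the index $\alpha_0$ supplied by the hypothesis, linearity of $T_{\alpha_0}-T$ gives $(T_{\alpha_0}-T)(B)\subseteq\lambda\,(T_{\alpha_0}-T)(U)\subseteq[-\lambda a,\lambda a]$, and therefore
$$
T(B)\subseteq T_{\alpha_0}(B)+(T-T_{\alpha_0})(B)\subseteq T_{\alpha_0}(B)+[-\lambda a,\lambda a].
$$
Now $T_{\alpha_0}(B)$ is $\tau$-bounded because $T_{\alpha_0}$ is $ob$-bounded, the order interval $[-\lambda a,\lambda a]$ is $\tau$-bounded by \cite[Thm.2.19(i)]{AB}, and the sum of two $\tau$-bounded sets is $\tau$-bounded; hence $T(B)$ is $\tau$-bounded. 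Since $B$ was arbitrary, $T$ is $ob$-bounded.

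The telescoping $T=T_{\alpha_0}+(T-T_{\alpha_0})$ and the ``sum of bounded sets is bounded'' step are routine and already appear in Proposition~\ref{t is also ob-bounded}; the one point that needs care is the passage from the hypothesis, which is stated on one fixed zero neighborhood $U$, to something usable on an arbitrary order interval of $X$. This is exactly where I use that $U$ is absorbing and solid (equivalently, that order intervals of $X$ are $\acute{\tau}$-bounded by \cite[Thm.2.19(i)]{AB} and hence absorbed by $U$). Since the argument invokes only this single $U$, it is insensitive to whether the hypothesis is read as holding for some, or for every, zero neighborhood.
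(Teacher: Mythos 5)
Your proof is correct and follows essentially the same route as the paper's: decompose $T(B)\subseteq T_{\alpha_0}(B)+[-\lambda a,\lambda a]$ and use that $T_{\alpha_0}(B)$ is $\tau$-bounded, order intervals are $\tau$-bounded by \cite[Thm.2.19(i)]{AB}, and a sum of bounded sets is bounded. If anything, you are more careful than the paper, which applies the uniform-order-convergence hypothesis directly to the order bounded set $B$ without spelling out the absorption/solidity step that you make explicit.
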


\begin{proof}
Suppose $B$ is an order bounded set. So, $B$ is bounded in $X$; see \cite[Thm.2.19(i)]{AB}. Then, by assumption, for any $e\in E_+$ there exists an index $\alpha_0$ with $(T-T_\alpha)(B)\subseteq [-e,e]$ for each $\alpha\geq \alpha_0$. Thus, we have
$$
T(B)\subseteq T_{\alpha_0}(B)+[-e,e].
$$
Fix a $\tau$-neighborhood $V$, and consider another $\tau$-neighborhood $W$ with $W+W\subseteq V$. Since $T_{\alpha_0}$ is $ob$-bounded, there exists a positive scalar $\gamma_1$ such that $T_{\alpha_0}(B)\subseteq \gamma_1 W$. Also, by using \cite[Thm.2.19(i)]{AB}, there exists another positive scalar $\gamma_2$ such that $[-e,e]\subseteq \gamma_2 W$. Let's take $\gamma=\max\{\gamma_1,\gamma_2\}$, and so, by solidness of $W$, we have $\gamma_1 W\subseteq\gamma W$ and $\gamma_2 W\subseteq\gamma W$. Then we get
$$
T(B)\subseteq T_{\alpha_0}(B)+[-e,e]\subseteq\gamma_1 W+\gamma_2 W\subseteq\gamma W+\gamma W\subseteq \gamma V.
$$
Therefore we get the desired result.
\end{proof}

\begin{thm}
Let $(T_\alpha)$ be a net of $B_{ob}(X,E)$ which uniform convergent to the linear operator $T$ on order bounded sets. If $(E,\tau)$ is topologically complete vector lattice and every order bounded set in $E$ is absorbing then $B_{ob}(X,E)$ is complete with respect to the topology of uniform convergence on order bounded sets.
\end{thm}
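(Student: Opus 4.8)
The statement is most naturally read as the completeness assertion: $B_{ob}(X,E)$, equipped with the topology of uniform convergence on order bounded sets, contains the limit of each of its Cauchy nets. Accordingly, the plan is to begin from an arbitrary net $(T_\alpha)\subseteq B_{ob}(X,E)$ that is Cauchy for this topology — explicitly, for every order bounded $B\subseteq X$ and every zero neighborhood $V$ in $E$ there is an index $\alpha_0$ with $(T_\alpha-T_\beta)(B)\subseteq V$ whenever $\alpha,\beta\geq\alpha_0$ — and to produce a limit inside the space. The first step is to build the candidate limit pointwise: for a fixed $x\in X$ the singleton $\{x\}$ is order bounded, since $\{x\}\subseteq[-\lvert x\rvert,\lvert x\rvert]$, so applying the Cauchy condition with $B=\{x\}$ shows $(T_\alpha x)$ is a $\tau$-Cauchy net in $E$; as $(E,\tau)$ is topologically complete it converges, and I set $Tx$ to be its limit. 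Continuity of addition and scalar multiplication in $E$ then makes $T:X\to E$, $x\mapsto Tx$, a linear operator.

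The second step is to promote this pointwise convergence to uniform convergence on order bounded sets; this is the only place the full Cauchy hypothesis is needed rather than mere pointwise convergence. Fix an order bounded $B$ and a zero neighborhood $V$, choose $U$ with $U+U\subseteq V$, and take $\alpha_0$ from the Cauchy condition for the pair $(B,U)$. Given $\alpha\geq\alpha_0$ and $x\in B$, choose an index $\beta\geq\alpha_0$, depending on $x$, with $T_\beta x-Tx\in U$ — legitimate because $T_\beta x\to Tx$ — and then
\[
T_\alpha x-Tx=(T_\alpha x-T_\beta x)+(T_\beta x-Tx)\in U+U\subseteq V .
\]
Since $x\in B$ was arbitrary, $(T_\alpha-T)(B)\subseteq V$ for every $\alpha\geq\alpha_0$, i.e. $(T_\alpha)$ converges to $T$ uniformly on order bounded sets.

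The third step closes the argument: $(T_\alpha)$ is a net of $ob$-bounded operators converging to the linear operator $T$ uniformly on order bounded sets, so Proposition \ref{t is also ob-bounded} applies verbatim and gives that $T$ is $ob$-bounded, i.e. $T\in B_{ob}(X,E)$. Hence the Cauchy net $(T_\alpha)$ converges to a point of $B_{ob}(X,E)$, which is the claimed completeness. The hypothesis that every order bounded subset of $E$ is absorbing enters here only in a supporting capacity — in this route it functions as a convenient background condition keeping the uniform-convergence topology well-behaved and feeding Proposition \ref{t is also ob-bounded}; the two ingredients doing the real work are the completeness of $E$ (Step 1) and the Cauchy property (Step 2).

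I expect the only delicate point to be Step 2: one must not try to choose the auxiliary index $\beta$ uniformly over $x\in B$, since that is tantamount to assuming the conclusion. The correct reading is that the Cauchy estimate $(T_\alpha-T_\beta)(B)\subseteq U$ is already uniform in $x$ by itself, while $T_\beta x-Tx\in U$ is needed only for a single $\beta=\beta(x)$; splicing the two pieces together is exactly what yields the uniform estimate on $B$. Everything else — verifying linearity of $T$, the routine directed-set bookkeeping, and the citation of Proposition \ref{t is also ob-bounded} — is standard.
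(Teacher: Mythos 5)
Your proposal is correct, and it is in fact more complete than the paper's own argument. The paper gets pointwise Cauchyness of $(T_\alpha x)$ by invoking the hypothesis that order bounded sets are absorbing: any $x$ lies in $\lambda B$ for some $\lambda>0$, so the uniform Cauchy estimate on $B$ transfers to $x$. You instead observe that $\{x\}\subseteq[-\lvert x\rvert,\lvert x\rvert]$ is itself order bounded and apply the Cauchy condition to that singleton, which makes the absorbing hypothesis superfluous for this step (and, contrary to your closing remark, it is not used by Proposition~\ref{t is also ob-bounded} either --- your route shows the hypothesis can be dropped entirely). More importantly, the paper stops after defining $T(x)=\lim T_\alpha(x)$ and cites Proposition~\ref{t is also ob-bounded}, but that proposition requires $(T_\alpha)$ to converge to $T$ \emph{uniformly} on order bounded sets, not merely pointwise; and uniform convergence is also what is needed to conclude that the Cauchy net converges in the topology of the space. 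Your Step~2 --- the three--term splitting $T_\alpha x-Tx=(T_\alpha x-T_\beta x)+(T_\beta x-Tx)$ with $\beta=\beta(x)$ chosen per point while the Cauchy estimate stays uniform over $B$ --- is exactly the missing bridge, and you handle the one delicate point (not choosing $\beta$ uniformly) correctly. So your argument both repairs the gap and slightly generalizes the result.
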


\begin{proof}
Suppose $(E,\tau)$ is topological complete and $(T_\alpha)$ is a Cauchy net in $B_{ob}(X,E)$. Consider an order bounded set $B$ in $X$. Then, for each zero neighborhood $V$, there exists an index $\alpha_0$ such that 
$$
(T_\alpha-T_\beta)(B)\subseteq V
$$
for each $\alpha\geq\alpha_0$ and for each $\beta\geq\alpha_0$. For any $x\in X$, there is a positive real $\lambda$ such that $x\in \lambda B$ since $B$ is absorbing. Thus, $(T_\alpha-T_\beta)(x)\subseteq V$ for each $\alpha,\beta \geq\alpha_0$, so we conclude that $(T_\alpha(x))$ is a Cauchy net in $E$. Put $T(x)=\lim T_\alpha(x)$. Therefore, by Proposition \ref{t is also ob-bounded}, we get the desired result.
\end{proof}

\begin{thm}
Let $(T_\alpha)$ be a net of $ob$-bounded operators between locally solid vector lattices $(X,\acute{\tau})$ and $(E,\tau)$ which order convergent uniformly on zero neighborhood to the linear operator $T$. If $(E,\tau)$ is topologically complete vector lattice then $B_{ob}(X,E)$ is complete with respect to the topology of order convergent uniformly on zero neighborhood sets.
\end{thm}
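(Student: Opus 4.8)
The plan is to follow the template of the preceding completeness theorem, now testing on the zero neighbourhoods of $X$ and measuring smallness by the order intervals of $E$. So let $(T_\alpha)$ be an arbitrary Cauchy net in $B_{ob}(X,E)$ for the topology of order convergence uniformly on zero neighbourhood sets; unwinding the definition, for every zero neighbourhood $U$ of $(X,\acute{\tau})$ and every $e\in E_+$ there is an index $\alpha_0$ with $(T_\alpha-T_\beta)(U)\subseteq[-e,e]$ whenever $\alpha,\beta\geq\alpha_0$.

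First I would construct the pointwise limit. Fix one zero neighbourhood $U$ of $X$; being a zero neighbourhood in a topological vector space, $U$ is absorbing, so each $x\in X$ lies in $\lambda U$ for some $\lambda>0$. Applying the above inclusion with $\lambda^{-1}e$ in place of $e$ then shows that, for every $e\in E_+$, there is $\alpha_0$ with $(T_\alpha-T_\beta)(x)\in[-e,e]$ for all $\alpha,\beta\geq\alpha_0$. Next I would record the elementary fact that every zero neighbourhood of a locally solid vector lattice contains an order interval: a solid zero neighbourhood $V$ of $E$ is absorbing, hence contains $\lambda_0 e_0$ for some $e_0\in E_+$ and $\lambda_0>0$, and then $[-\lambda_0 e_0,\lambda_0 e_0]\subseteq V$ by solidity. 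Combining these two observations, $(T_\alpha(x))_\alpha$ is a $\tau$-Cauchy net in $E$; since $(E,\tau)$ is topologically complete, the limit $T(x):=\lim_\alpha T_\alpha(x)$ exists in $E$. Linearity of $T$ is immediate from the linearity of each $T_\alpha$ together with the continuity of the vector operations of $E$.

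Next I would verify that $(T_\alpha)$ converges to $T$ in the ambient topology, which by Proposition~\ref{t is o} also gives $T\in B_{ob}(X,E)$. Fix a zero neighbourhood $U$ of $X$ and $e\in E_+$, and take the corresponding $\alpha_0$. For $u\in U$ and $\alpha\geq\alpha_0$ one has $T_\alpha(u)-T_\beta(u)\in[-e,e]$ for all $\beta\geq\alpha_0$; letting $\beta$ range and using that the order interval $[-e,e]$ is $\tau$-closed, this passes to the limit to give $T_\alpha(u)-T(u)\in[-e,e]$, i.e. $(T_\alpha-T)(U)\subseteq[-e,e]$ for every $\alpha\geq\alpha_0$. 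As $U$ and $e$ were arbitrary, $(T_\alpha)$ order converges uniformly on zero neighbourhoods to $T$, so $T$ is $ob$-bounded by Proposition~\ref{t is o}, and hence the Cauchy net $(T_\alpha)$ converges within $B_{ob}(X,E)$. Since $(T_\alpha)$ was an arbitrary Cauchy net, $B_{ob}(X,E)$ is complete for this topology.

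I expect the two points needing the most care to be: (i) the observation that every zero neighbourhood contains an order interval -- this is what turns the uniform order-smallness of $(T_\alpha(x))$ into genuine $\tau$-Cauchyness, so that topological completeness of $E$ can be invoked; and (ii) the $\tau$-closedness of order intervals used in the limit exchange, which follows from $E_+$ being $\tau$-closed and hence needs $(E,\tau)$ to be Hausdorff -- if Hausdorffness is not part of the standing hypotheses it should be added, since it is in any case needed for $\lim_\alpha T_\alpha(x)$ to be well defined. The remaining ingredients -- the unwinding of the Cauchy condition, the rescaling by $\lambda$, and the check of linearity -- are routine.
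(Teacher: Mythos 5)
Your proposal is correct and follows the same route as the paper: extract a pointwise $\tau$-Cauchy net from the uniform order-Cauchy condition, define $T(x)=\lim_\alpha T_\alpha(x)$ by topological completeness of $E$, and conclude via Proposition~\ref{t is o}. In fact your write-up is tighter than the paper's own proof, which tests on a ``bounded set $B$'' and asserts it is absorbing ``since $B$ is solid'' (a non sequitur --- the correct test sets are the zero neighbourhoods of $X$, which are absorbing automatically, exactly as you use them), and which omits both the observation that every solid zero neighbourhood of $E$ contains an order interval (needed to turn the order-Cauchy estimate into genuine $\tau$-Cauchyness) and the verification that $T_\alpha\to T$ in the ambient topology before invoking the proposition; your Hausdorff caveat for the $\tau$-closedness of $[-e,e]$ is also well taken.
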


\begin{proof}
Suppose $(E,\tau)$ is topological complete and $(T_\alpha)$ is a Cauchy net in $B_{ob}(X,E)$. Consider a bounded set $B$ in $X$. Thus, for each $a\in E_+$, there is an $\alpha_0$ with $$
(T_\alpha-T_\beta)(B)\subseteq [-a,a]
$$
for each $\alpha, \beta\geq \alpha_0$. For any $x\in X$, there is a positive real $\lambda$ such that $x\in \lambda B$ since $B$ is solid. Thus, $(T_\alpha-T_\beta)(x)\subseteq V$ for each $\alpha,\beta \geq\alpha_0$, so we conclude that $(T_\alpha(x))$ is a Cauchy net in $E$. Put $T(x)=\lim T_\alpha(x)$. Therefore, by Proposition \ref{t is o}, we get the desired result.
\end{proof}

\end{document}